\newcommand {\A}  {\ensuremath{\operatorname{Aut}}}
\newcommand {\Fix}  {\ensuremath{\operatorname{Fix}}}
\newcommand {\D}    {\ensuremath{\operatorname{D}}}
\newcommand {\Prob} {\ensuremath{\operatorname{Pr}}}
\newcommand{\Sym}{\ensuremath{\operatorname{Sym}}}
\newcommand{\N}{{\mathbb{N}}}
\newcommand {\dsc}  {\ensuremath{\operatorname{DSC}}}
\newcommand {\Vblue}  {\ensuremath{\mathcal{B}}}
\newcommand {\Vred}  {\ensuremath{\mathcal{R}}}
\newtheorem{theorem}{Theorem}
\newtheorem{lemma}[theorem]{Lemma}
\newtheorem{corollary}[theorem]{Corollary}
\theoremstyle{definition}
\newtheorem{example}[theorem]{Example}
\theoremstyle{remark}
\begin{document}

\title
{Distinguishing density and the Distinct Spheres Condition}
\author{
Wilfried Imrich\\
Montanuniversit\"{a}t Leoben, 8700 Leoben, Austria \\
imrich@unileoben.ac.at\\
\and
Florian Lehner\thanks{Florian Lehner was supported by the Austrian Science Fund (FWF) Grant no. J 3850-N32}\\
Mathematics Institute,
University of Warwick,\\
Coventry, UK\\
mail@florian-lehner.net
\and
Simon M.~Smith\\
School of Mathematics and Physics,
University of Lincoln,\\
Lincoln, UK\\
sismith@lincoln.ac.uk
}
\date{\today}
\maketitle

Keywords: {Distinguishing number, automorphisms, infinite graphs.}
{05C25, 05C63, 05C15, 03E10.}

\begin{abstract}

If a graph $G$ has distinguishing number 2, then there exists a partition of its vertex set into two parts, such that no nontrivial automorphism of $G$ fixes setwise the two parts. Such a partition is called a 2-\emph{distinguishing coloring} of $G$, and the parts are called its \emph{color classes}. If $G$ admits such a coloring, it is often possible to find another in which one of the color classes is sparse in a certain sense. In this case we say that $G$ has 2-\emph{distinguishing density zero}. An extreme example of this would be an infinite graph admitting a 2-distinguishing coloring in which one of the color classes is finite.

If a graph $G$ contains a vertex $v$ such that, for all $n \in \N$, any two distinct vertices equidistant from $v$ have nonequal $n$-spheres, then we say that $G$ satisfies the \emph{Distinct Spheres Condition}. In this paper we prove a general result: any countable connected graph that satisfies the Distinct Spheres Condition is 2-distinguishable with density zero. We present two proofs of this, one that uses a deterministic coloring, and another (that applies only to locally finite graphs) using a random coloring. From this result, we deduce that several important families of countably infinite and connected graphs are 2-distinguishable with density zero, including those that are locally finite and primitive. Furthermore, we prove that any connected graph with infinite motion and subquadratic growth is 2-distinguishable with density zero.

\end{abstract}

\section{Introduction}

Let $G$ be a graph without loops or multiple edges.
If $G$ admits an $n$-coloring of its vertices that is only preserved by the identity automorphism, then $G$ is called \emph{$n$-distinguishable} and the $n$-coloring is called an \emph{$n$-distinguishing coloring}.

A large number of papers have been written about the distinguishability of finite and infinite graphs. Arguably, in this context the most important class of graphs are those that are $2$-distinguishable. Such graphs have automorphism groups that are minimal with respect to the relation of strong orbit equivalence (see \cite{bailey-cameron-11} or \cite{imsmtuwa-14} for example), and there are important permutational properties which, when exhibited by $\A(G)$, guarantee that $G$ is $2$-distinguishable. For example, it is known that
every finite vertex-primitive graph with at least $33$ vertices, other than the complete graph or its complement, is 2-distinguishable
(see \cite[Theorem 1]{seress-97}), and the only known proof of this depends on the Classification of the Finite Simple Groups.
A similar result holds for (countably) infinite vertex-primitive graphs with infinite diameter (see \cite[Theorem 1]{smtuwa-xx}). There is a well-known open conjecture about 2-distinguishability, called \emph{The Infinite Motion Conjecture}. A graph $G$ is said to have \emph{infinite motion} if every nontrivial automorphism of $G$ moves infinitely many vertices, and the conjecture states that every connected, locally finite graph with infinite motion is 2-distinguishable.

In this paper we focus on this important class of 2-distinguishable graphs, and consider those that are countably infinite and connected. For such a graph $G$ one can partition the vertex set $V = \Vblue \cup \Vred$ (into ``blue'' and ``red'' color classes of vertices) such that the identity is the only automorphism of $G$ that fixes setwise $\Vblue$ and $\Vred$. We always assume, without loss of generality, that $|\Vblue| \leq |\Vred|$.

It is possible that $|\Vblue|$ is finite, even when $G$ is infinite. In this case the minimum cardinality of $\Vblue$ among all such partitions is called the \emph{2-distinguishing cost} of $G$,
and it is known, for example, that a connected, locally finite graph $G$ whose automorphism group is infinite has finite 2-distinguishing cost if and only if $\A(G)$ is countable (see \cite[Theorem 3.3]{boutin-imrich-14}).
In this paper we do not consider this situation, and instead investigate the more common scenario in which $|\Vblue|$ is infinite for every 2-distinguishing coloring of $G$.
We prove that
it is possible to find a 2-distinguishing coloring of $G$ in which the blue vertices are sparse in a certain sense (see Section~\ref{section:prelim} for a precise definition)
in the following situations:
\begin{enumerate}[label=(\roman*)]
\item
	if $G$ is locally finite and primitive (see Corollary~\ref{main:cor});
\item
	 $G$ is a tree without leaves (see Corollary~\ref{main:cor});
\item
	 $G$ is vertex-transitive of connectivity 1  (see Corollary~\ref{main:cor});
\item
	 $G$ is the Cartesian product of any two connected graphs of infinite diameter  (see Corollary~\ref{main:cor});
\item \label{label:last}
	 $G$ has infinite motion and subquadratic growth (see Theorem~\ref{thm:spheres}); and
\item
	 $G$ satisfies the Distinct Spheres Condition (see Theorem~\ref{thm:dsc}).
\end{enumerate}
Our proof of \ref{label:last} simplifies the proof of the main result of \cite{cuimle-2014},
that locally finite graphs with infinite  motion and subquadratic growth are 2-distinguishable.

\section{Preliminaries}
\label{section:prelim}

Throughout this paper, $G$ will be a simple graph without loops or multiple edges, with vertex set $V$ and edge set $E$. An $X$-\emph{coloring} (sometimes called an $X$-\emph{labeling}) $l$ of $G$ is a mapping $l:V \rightarrow X$. If $X$ has cardinality $n$, we say that $l$ is an $n$-coloring. In this paper, $l$ will typically be a 2-coloring, with $X = \{\text{blue},\text{red}\}$.

Let $l$ be an $X$-labeling of $G$ and $g\in\A(G)$. If, for every $v\in V$, we have that $l({g(v)}) = l(v)$, then we say that $l$ \emph{is preserved by} $g$. If $l$ is not preserved by $g$, then we say that $g$ \emph{breaks} $l$. The coloring $l$ is called \emph{distinguishing} if it is only preserved by the trivial automorphism.  Accordingly, an \emph{$n$-distinguishing coloring} of $G$ is a distinguishing coloring that is also an $n$-coloring.
The \emph{distinguishing number} $\D(G)$ of $G$ is then the least cardinal $d$ such that there exists a $d$-distinguishing coloring of $G$.

The \emph{ball with center $v\in V$ and radius $r$} in $G$ is the set of all vertices $x\in V$ with $d(v,x)\leq r$ and is denoted by $B_{v}(r)$.
The \emph{sphere with center $v\in V$ and radius $r$} is the set of all vertices $x\in V$ with $d(v,x)=r$ and is denoted by $S_{v}(r)$.
For terms not defined here we refer the reader to \cite{haimkl-11}.

The graphs considered in this paper are infinite. If a graph $G$ is locally finite, then all balls and spheres of finite radius are finite. Moreover, if the diameter of $G$ is infinite then, for all $r \in \N$,
\[\left|B_{v}(r)\right| = \sum_{i={0}}^r\left|S_{v}(i)\right|\quad\text{and}\quad\left|S_{v}(i)\right| \geq 1\,.\]
For functions $F : \N \rightarrow \N$ and $f: \N \rightarrow \N$, we write $F(r) \sim \mathcal{O}(f(r))$ if there exists a constant $c$ such that $F(r) \leq c f(r)$ for all $r \in \N$.\\

The graphs in this paper will be 2-distinguishable.
Let $G$ be a countable connected graph that admits a 2-distinguishing coloring $l$.
For any set $W$ of vertices in $G$, and any vertex $v$ of $G$, we define the density of $W$ at $v$ to be
\[\delta_v(W) := \lim_{n\rightarrow \infty}\frac{|B_v(n)\cap W|}{|B_v(n)|}\]
if this limit exists, with the convention that a finite cardinal divided by an infinite cardinal is zero, and a quotient of two infinite cardinals is undefined. If $\delta_v(W)$ exists for all vertices $v \in V$, then we define the density of $W$ to be
\[\delta(W) := \sup \{\delta_v(W) : v \in V\}.\]

If $\Vblue$ and $\Vred$ are the two color classes of $l$ and $\delta(\Vblue)$ or $\delta(\Vred)$ exist, then the \emph{density of $l$ at $v$} is $\delta_v(l) := \min \{\delta_v(\Vblue), \delta_v(\Vred)\}$, and the \emph{density of $l$} is
\[\delta(l) := \min \{\delta(\Vblue), \delta(\Vred)\}.\]

If $l$ is a 2-distinguishing coloring of $G$ and $\delta_v(l) = 0$ for some vertex $v$, then we say that $G$ has \emph{2-distinguishing density zero at $v$}. If  $\delta(l) = 0$ then we say that $G$ has \emph{2-distinguishing density zero}.

\begin{lemma} \label{lemma:at_v_vs_all_v}
Let $G$ be a connected graph and let $v, w$ be vertices in $G$. Suppose there is a constant $c$ such that for all $n \in \N$ we have $|B_w(n+1)| < c \cdot |B_w(n)|$. If $G$ has 2-distinguishing density zero at $v$, then $G$ has 2-distinguishing density zero.
\end{lemma}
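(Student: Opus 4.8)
The plan is to keep the same coloring throughout. Fix a $2$-distinguishing coloring $l$ of $G$ with color classes $\Vblue,\Vred$ such that $\delta_v(l)=0$; since $\delta_v(l)=\min\{\delta_v(\Vblue),\delta_v(\Vred)\}$, there is a color class $W\in\{\Vblue,\Vred\}$ with $\delta_v(W)=0$. I first observe that the hypothesis $|B_w(n+1)|<c\,|B_w(n)|$ forces every ball $B_w(n)$ to be finite: for an infinite ball the strict inequality cannot hold, as $c$ is a finite constant, $c\,|B_w(n)|=|B_w(n)|$, and $B_w(n)\subseteq B_w(n+1)$. Consequently, via the inclusion $B_x(n)\subseteq B_w(n+d(x,w))$, every ball of $G$ is finite, so all cardinalities below are ordinary natural numbers and all densities are genuine limits in $[0,1]$. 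It then suffices to prove that $\delta_u(W)=0$ for every vertex $u$, for then $\delta(W)=\sup_u\delta_u(W)=0$ and hence $\delta(l)=\min\{\delta(\Vblue),\delta(\Vred)\}=0$, as required.

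Fix an arbitrary vertex $u$ and set $d:=d(u,v)$. The key is the pair of standard inclusions
\[
B_v(n-d)\subseteq B_u(n)\subseteq B_v(n+d)\qquad(n\geq d),
\]
which follow from the triangle inequality. Intersecting the right inclusion with $W$ and using the left inclusion in the denominator yields, for all $n\geq d$,
\[
\frac{|B_u(n)\cap W|}{|B_u(n)|}\;\leq\;\frac{|B_v(n+d)\cap W|}{|B_v(n-d)|}\;=\;\frac{|B_v(n+d)\cap W|}{|B_v(n+d)|}\cdot\frac{|B_v(n+d)|}{|B_v(n-d)|}.
\]
As $n\to\infty$ the first factor on the right tends to $\delta_v(W)=0$, so the whole argument reduces to showing that the second factor stays bounded independently of $n$.

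This boundedness is exactly where the doubling hypothesis enters, and it is the only genuinely delicate point. Since the hypothesis is stated only at $w$, I would first propagate it to $v$: combining the inclusions $B_v(m)\subseteq B_w(m+e)$ and $B_w(m-e)\subseteq B_v(m)$, where $e:=d(v,w)$, with the iterated inequality $|B_w(n+1)|<c\,|B_w(n)|$ gives $|B_v(m+1)|\leq|B_w(m+1+e)|<c^{2e+1}\,|B_w(m-e)|\leq c^{2e+1}\,|B_v(m)|$, so a one-step doubling bound holds at $v$ with constant $c':=c^{2e+1}$. Iterating this $2d$ times bounds $|B_v(n+d)|/|B_v(n-d)|$ by the constant $(c')^{2d}$, uniformly in $n$. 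Hence the right-hand side above is at most $(c')^{2d}\,|B_v(n+d)\cap W|/|B_v(n+d)|$, which tends to $0$; as the left-hand side is nonnegative, the squeeze theorem gives $\delta_u(W)=0$.

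I expect the propagation of the doubling constant from $w$ to $v$, together with the bookkeeping of the exponent as $d$ varies with $u$, to be the main thing to state cleanly, although it is routine. The essential content is simply that a one-step volume-doubling bound at a single vertex forces uniformly bounded ball-ratios $|B_x(n+r)|/|B_x(n-r)|$ at every vertex, and it is precisely this uniformity---absent in general graphs---that permits density zero to be transported from $v$ to all of $V$.
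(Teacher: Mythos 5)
Your proposal is correct and follows essentially the same route as the paper: both arguments use the triangle-inequality inclusions $B_v(n-d)\subseteq B_u(n)\subseteq B_v(n+d)$ together with the doubling hypothesis at $w$ (transported to other vertices via these inclusions) to bound $|B_v(n+d)|/|B_u(n)|$ by a constant independent of $n$, and then let $n\to\infty$. Your explicit preliminary observations---that the hypothesis forces all balls to be finite, and that the doubling constant propagates from $w$ to $v$ as $c'=c^{2d(v,w)+1}$---are correct and merely make explicit what the paper's proof does in a single chain of inequalities.
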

\begin{proof} Suppose $l$ is a 2-distinguishing coloring of $G$ with density zero at $v$, for some vertex $v \in V$. Let $\Vblue$ be a color class of $l$ such that $\delta_v(\Vblue) = 0$.
For all vertices $x,y \in V$ and all $n \geq d(x,y)$, we have
$B_y(n - d(x,y))\subseteq B_x(n) \subseteq B_y(n + d(x,y))$.
Hence, for all suitably large $n$,
\begin{equation} \label{eq:exp}
c^{-d(w,y)} \cdot |B_w(n)| \leq |B_y(n)| \leq |B_w(n)| \cdot c^{d(w,y)}.
\end{equation}

Fix any vertex $x \in V$. Now,
$|B_x(n)| \geq c^{-d(w,x)} |B_w(n)|$ by \eqref{eq:exp}, and $|B_w(n)| > c^{-d(v,x)} |B_w(n+d(v,x))|$ by assumption. Moreover,
 by \eqref{eq:exp}, $|B_w(n+d(v,x))| \geq c^{-d(w,v)} |B_v(n+d(v,x))|$. Hence, writing $k := c^{-d(w,x)-d(v,x)-d(w,v)}$ and $d := d(v,x)$, we have
\[|B_x(n)| > k |B_v(n+d)| \quad \text{ and } \quad |B_x(n) \cap \Vblue| \leq |B_v(n+d) \cap \Vblue|.\]
Therefore
\[\frac{|B_x(n) \cap \Vblue|}{|B_x(n)|} \leq \frac{1}{k} \cdot \frac{|B_v(n+d) \cap \Vblue|}{|B_v(n+d)|}.\]
Since the coloring $l$ has zero density at $v$, it follows that the coloring must also have zero density at $x$.
\end{proof}

The following is an example of a connected graph that does not have 2-distinguishing density zero, but nevertheless has  2-distinguishing density zero at some vertex $v$.

\begin{example}
Let $G$ be a graph constructed as follows. Start with two families of disjoint paths, $P_n$ and $P_n'$, where $P_1$ and $P_1'$ are trivial one-vertex paths and $|P_n| = |P_n'| = n \sum_{i=1}^{n-1} |P_i|$ for all $n > 1$. For every $n > 1$ add edges between all vertices of $P_n$ and one endpoint of $P_{n-1}$ and between all vertices of $P_n'$ and one endpoint of $P_{n-1}'$. Add another edge between the unique vertex $v$ of $P_1$ and the unique vertex $v'$ of $P_1'$. Finally, for every $n>1$ and every vertex $x \in P_n'$ we introduce two new vertices $v_1^x$ and $v_2^x$ and connect them to both $x$ and the unique neighbor of $x$ in $P_{n-1}'$.

Every automorphism of $G$ must pointwise fix the set $\left (\bigcup_{i \in \N} P_i \right ) \cup \left ( \bigcup_{i \in \N} P_i' \right )$. Furthermore, for any $x \in \bigcup_{i \in \N} P_i' $ there is an automorphism $\sigma_x$ of $G$ that interchanges $v_1^x$ and $v_2^x$ while fixing all other vertices of $G$. From these observations we can draw two conclusions. The first is that for any 2-distinguishing coloring $l'$, each pair $\{v_1^x, v_2^x\}$ must be assigned different colors. The second is that the following coloring $l$ is 2-distinguishing: $\Vblue = \{ v_1^x : x \in \bigcup_{i \in \N} P_i'\}$ and $\Vred = V(G) \setminus \Vblue$.

We claim that the coloring $l$ has density zero at the unique vertex $v$ of $P_1$. Indeed, the only blue vertices in $B_v(n)$ are vertices $v_1^x$ for $x \in \bigcup_{i = 1}^{n-1} P_i'$, but $B_v(n)$ also contains the red vertices of $P_n$. Hence,
\[
\frac{|B_v(n)\cap \Vblue|}{|B_v(n)|} \leq \frac{\sum_{i=1}^{n-1} |P_i'|}{|P_n|} = \frac 1n,
\]
showing that the coloring has density zero at $v$.

On the other hand we claim that any 2-distinguishing coloring $l'$ does not have density zero at the unique vertex $v'$ of $P_1'$. Indeed, let $\Vblue'$ and $\Vred'$ be the two color classes of $l'$, and note that
the ball $B_{v'}(n)$ consists of $\bigcup_{i = 1}^{n-1} P_i$, $\bigcup_{i = 1}^{n} P_i'$, and the vertices $v_1^x$ and $v_2^x$ for $x \in \bigcup_{i = 1}^{n} P_i'$. Hence
\[
|B_{v'}(n)| = 4 \sum_{i=1}^{n-1} |P_i| + 3 |P_n| = \left ( 3+\frac 4n \right ) |P_n|.
\]
As observed previously, for every $x \in P_n'$ the vertices $v_1^x$ and $v_2^x$ must lie in distinct color classes of $l'$, because $l'$ is distinguishing. Hence $|B_{v'}(n) \cap \Vblue| \geq |P_n|$. It follows immediately that $l'$ does not have density zero at $v'$. \end{example}

\section{Zero density for graphs satisfying the Distinct Spheres Condition}
\label{sec:spheres}

A  graph $G$ is said to satisfy the \emph{Distinct Spheres Condition}, or DSC, if there
exists a vertex $v\in V$ such that, for all distinct
 $u,w \in V$,
$$d(v,u) = d(v, w) \mbox{ implies } S_u(n) \neq S_w(n) \mbox{ for infinitely many } n\in \mathbb{N}.$$
Notice that graphs that satisfy the DSC have infinite diameter, even when they are not locally finite.
Any countable, connected graph satisfying the Distinct Spheres Condition is known to be 2-distinguishable by The Distinct Spheres Lemma in \cite{imsmtuwa-14}.

Let $\dsc(u,w) := \bigcup_{n \in \N} \left ( S_u(n) \Delta S_w(n) \right )$, where $\Delta$ denotes the symmetric difference. If $G$ satisfies the DSC then for any two vertices $u, w$ equidistant from $v$ there are infinitely many $n \in \N$ for which $S_u(n) \Delta S_w(n)$ is nonempty. Hence $\dsc(u,w)$ is infinite and contains vertices of arbitrary distance from $v$. Note that an automorphism in $G$ that fixes a vertex $x \in \dsc(u,w)$ cannot move $u$ to $w$ or vice versa.

\begin{theorem} \label{thm:dsc}
If $G$ is a countable connected graph that satisfies the Distinct Spheres Condition, then $G$ is 2-distinguishable with density zero.
\end{theorem}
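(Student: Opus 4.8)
The plan is to exhibit a single color class $\Vblue$ that is simultaneously a metric resolving set for $G$ and rigid enough that the only color-preserving automorphism is the identity, while being so sparse that $\delta_w(\Vblue)=0$ for every vertex $w$. Recall from the discussion preceding the theorem that if a blue vertex $x\in\dsc(u,w)$ is fixed by a color-preserving automorphism $g$, then $g$ cannot send $u$ to $w$; this is the mechanism I will use to break automorphisms, organized through the notion of a resolving set.

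First I would fix the DSC-center $v$, color it blue, and enumerate the (countably many) pairs $\{u,w\}$ of distinct vertices with $d(v,u)=d(v,w)$ as $p_1,p_2,\dots$. Processing the pairs in turn, for each $p_i=\{u_i,w_i\}$ I would select a blue \emph{resolver} $x_i\in\dsc(u_i,w_i)$ — so that $d(x_i,u_i)\neq d(x_i,w_i)$ — choosing it at a distance $m_i:=d(v,x_i)$ that is strictly larger than every distance used so far and at least $2^i$. This is possible precisely because the DSC guarantees that $\dsc(u_i,w_i)$ contains vertices arbitrarily far from $v$. The resulting blue set $\Vblue=\{v\}\cup\{x_i : i\geq 1\}$ resolves $G$: any two vertices at different distances from $v$ are separated by $v$ itself, and any two equidistant vertices form some pair $p_i$ and are separated by $x_i$.

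With $\Vblue$ resolving, it suffices to force every color-preserving automorphism $g$ to fix $\Vblue$ pointwise, for then $d(g(a),x)=d(a,x)$ for all $x\in\Vblue$ and all $a$, and resolving yields $g(a)=a$. Since $g$ preserves $\Vblue$ setwise and preserves distances, it induces a self-isometry of the metric space $(\Vblue,d)$, so I only need this metric space to be rigid. Because the resolvers sit at pairwise distinct distances $m_1<m_2<\cdots$ from $v$, any self-isometry fixing $v$ must fix each $x_i$ and hence be the identity; thus rigidity reduces to showing that $v$ itself is fixed. To pin $v$ I would attach a finite asymmetric blue gadget near $v$ — for instance coloring two non-adjacent neighbours of $v$ so that $v$ becomes the unique vertex having two blue neighbours — having first chosen $m_1$ large enough that all resolvers are isolated blue vertices lying far from this gadget, so that the gadget is the only nontrivial blue configuration any vertex can see locally. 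Adding finitely many blue vertices leaves the density unchanged, and with $v$ pinned the metric space $(\Vblue,d)$ is rigid, so $g=\mathrm{id}$.

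Finally, density zero is immediate from the growth condition $m_i\geq 2^i$: for every vertex $w$ the ball $B_w(n)$ meets $\Vblue$ in at most $O(\log n)$ vertices (the gadget contributes $O(1)$, and at most $\log_2(n+d(v,w))$ resolvers lie within distance $n$ of $w$), whereas $|B_w(n)|$ is either infinite or tends to infinity; in both cases $\delta_w(\Vblue)=0$, so $\delta(\Vblue)=0$ and the coloring has density zero. I expect the main obstacle to be the pinning step: constructing a finite blue gadget that makes $v$ the unique realizer of its local pattern, and verifying that it cannot interact with the faraway resolvers to create a spurious self-isometry moving $v$. Degenerate local structure around $v$ (for example, when $v$ has no two non-adjacent neighbours) must be accommodated by an alternative gadget, such as a short blue path emanating from $v$, and this case analysis — together with the careful separation of scales between the gadget and the resolvers — is where the real work lies.
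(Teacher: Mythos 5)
Your overall strategy is the paper's: color the DSC-center $v$ blue, pick one blue resolver $x_i\in\dsc(p_i)$ for each pair $p_i$ of vertices equidistant from $v$, place the resolvers at pairwise distinct and rapidly growing distances from $v$ so that each is the unique blue vertex in its $v$-sphere, and add a small blue gadget near $v$ to pin $v$. The automorphism-breaking mechanism (a fixed blue vertex of $\dsc(u,w)$ forbids $u\mapsto w$; your ``resolving set'' phrasing is the same fact, since $x\in\dsc(u,w)$ means $d(x,u)\neq d(x,w)$) and the density computation are both sound; your choice $m_i\geq 2^i$ even gives $\mathcal{O}(\log n)$ blue vertices per ball versus the paper's $\mathcal{O}(\sqrt{n})$ from $d(v,x_i)\geq 7i^2$.

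The genuine gap is exactly where you located it: the pinning gadget, which you describe but do not actually construct. Coloring two non-adjacent neighbours of $v$ blue does not make $v$ ``the unique vertex having two blue neighbours'' --- those two neighbours may have another common neighbour --- and such a pair need not exist at all (if $v$ has degree one or a complete neighbourhood); the fallback ``short blue path emanating from $v$'' is also unfinished, since nothing yet prevents an isometry of $(\Vblue,d)$ from reversing such a path. The paper resolves this cleanly: since the DSC forces infinite diameter, one can choose adjacent vertices $a,b$ with $d(v,a)=2$ and $d(v,b)=3$ and color them blue; arranging the resolvers to be pairwise non-adjacent and at distance at least $7$ from $v$ makes $ab$ the unique blue edge, so any color-preserving automorphism stabilizes $\{a,b\}$ setwise and then fixes $v$ as the unique blue vertex at distance $2$ from $\{a,b\}$. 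Relatedly, requiring only $m_{i+1}>m_i$ permits consecutive resolvers to be adjacent or to share a neighbour, which could create further vertices with two blue neighbours and defeat your gadget; you should insist on a gap such as $m_{i+1}>m_i+2$. With a working gadget substituted in, the rest of your argument coincides with the paper's proof.
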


\begin{proof} Suppose $G$ satisfies the DSC with respect to the vertex  $v$. We consider all pairs of vertices $\{u, w\}$ that have equal distance from $v$. As the number of such pairs is countable, we enumerate them in some order, say $\{p_1, p_2, \ldots \}$.

We will denote the set of blue vertices in our coloring by $\Vblue$, where $\Vblue$ consists of $v$, two  adjacent vertices $a, b$ of distance 2 and 3 from $v$, and  isolated vertices $x_1 \in \dsc(p_1), x_2 \in \dsc(p_2), \ldots$ such that for all distinct $i,j \in \N$ we have:
\begin{enumerate*}[label=(\roman*)]
\item
	$d(v, x_i) \geq 7 i^2$;
\item
	$x_i$ and $x_j$ are not equidistant from $v$; and
\item
	$x_i$ and $x_j$ are not adjacent.
\end{enumerate*}
All vertices not in $\Vblue$ are colored red.

Any color preserving automorphism $g \in \A(G)$ has to stabilize the edge $ab$ and thus fixes $v$ since it is the only vertex of distance 2 from $ab$. Hence each $x_i \in \Vblue$ is also fixed by $g$, because it is the only blue vertex in its $v$-sphere.

Suppose a vertex $u$ is not fixed by $g$ and write $w := gu$. Then $\{u,w\}$ is a pair of vertices equidistant from $v$ and thus $\{u,w\} = p_i$ for some $i \in \N$. However, $x_i \in \dsc(u,w)$ is fixed by $g$. This is a contradiction. Hence $g$ is the identity and the coloring is distinguishing.

For any vertex $u$ and any $n \in \N$ we have that $B_u(n) \subseteq B_v(n + d(u,v))$, and hence $|B_u(n) \cap \Vblue| \leq |B_v(n + d(u,v)) \cap \Vblue| \leq 3 + \sqrt{\frac{n+d(u,v)}{7}} \sim \mathcal{O}(\sqrt{n})$. Since $|B_u(n)| \geq n$, we have that $G$ has $2$-distinguishing density zero.
\end{proof}

\begin{corollary} \label{main:cor}
The following graphs are 2-distinguishable with density zero:
\begin{enumerate}[label=(\roman*)]
\item
	connected, locally finite, primitive graphs;
\item
	denumerable trees without leaves;
\item
	denumerable vertex-transitive graphs of connectivity 1; and
\item
	the Cartesian product of any two connected denumerable graphs of infinite diameter.
\end{enumerate}
\end{corollary}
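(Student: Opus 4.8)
The plan is to deduce the corollary from Theorem~\ref{thm:dsc}. Since that theorem already yields ``2-distinguishable with density zero'' for every countable connected graph satisfying the DSC, and all four families consist of countable connected graphs of infinite diameter, it suffices to check that each family satisfies the Distinct Spheres Condition; the density-zero conclusion is then automatic, with no further work. Thus the whole corollary reduces to four verifications of the DSC, and I would present these, pointing out that they are exactly the verifications already used in \cite{imsmtuwa-14} to establish $2$-distinguishability via the Distinct Spheres Lemma --- the only new ingredient being the substitution of Theorem~\ref{thm:dsc} for that lemma.

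For denumerable trees without leaves I would argue directly, with $v$ any chosen root. If $u \neq w$ are equidistant from $v$, then neither is an ancestor of the other (an ancestor is strictly closer to $v$), so the geodesics from $u$ and $w$ to $v$ first meet at a common vertex $m$ with $s := d(m,u) = d(m,w) \geq 1$. Because the tree has no leaves, every vertex has degree at least $2$, so from $u$ one can descend away from $m$ indefinitely; hence for each $n \geq 1$ there is a vertex $z$ with $d(u,z) = n$ whose geodesic to $u$ avoids $m$. Such a $z$ satisfies $d(w,z) = d(w,m) + d(m,u) + d(u,z) = 2s + n > n$, so $z \in S_u(n) \setminus S_w(n)$. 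Thus $S_u(n) \neq S_w(n)$ for \emph{every} $n \geq 1$, and the DSC holds.

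For connected locally finite primitive graphs I would exploit the primitivity of $\A(G)$ on $V$. Define a relation by $u \approx w$ whenever $S_u(n) = S_w(n)$ for all but finitely many $n$; this is an equivalence relation, and it is $\A(G)$-invariant because $S_{gu}(n) = g\,S_u(n)$ for all $g \in \A(G)$. By primitivity $\approx$ is either the diagonal or the universal relation. To exclude the latter, apply K\"onig's Lemma to obtain a geodesic ray $\rho_0 = u, \rho_1, \rho_2, \dots$; then $d(u,\rho_m) = m$ while $d(\rho_1,\rho_m) = m-1$, so $\rho_m \in S_u(m) \setminus S_{\rho_1}(m)$ for every $m \geq 1$ and hence $u \not\approx \rho_1$. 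Therefore $\approx$ is the diagonal, which says precisely that distinct vertices have unequal spheres for infinitely many radii --- the DSC (for any $v$).

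The remaining two families I would treat by the same ``descend away from the partner'' idea adapted to their structure, and I expect these to be the main obstacle. For a denumerable vertex-transitive graph of connectivity $1$, vertex-transitivity makes every vertex a cut vertex, and the block--cut-vertex tree carries an $\A(G)$-action; distinct equidistant $u,w$ are separated in this tree, and descending into a block-branch below $u$ and away from the separating vertex should again produce elements of $S_u(n) \setminus S_w(n)$ for infinitely many $n$, in analogy with the tree case. For $G = G_1 \square G_2$ with both factors connected of infinite diameter one uses the product metric $d\bigl((a,b),(a',b')\bigr) = d_1(a,a') + d_2(b,b')$: writing $c_1 = d_1(u_1,w_1)$ and $c_2 = d_2(u_2,w_2)$ and pushing far out along the first or second factor from $u$, one checks that if $c_1 \neq c_2$ the dominant factor forces a vertex of $S_u(n) \setminus S_w(n)$ for all large $n$. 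The delicate point --- and where I would concentrate the effort --- is the balanced case $c_1 = c_2 \geq 1$, in which neither pure direction alone separates the spheres and one must locate a distinguishing vertex more carefully (or invoke the verification in \cite{imsmtuwa-14}). In every case, once the DSC is confirmed, Theorem~\ref{thm:dsc} finishes the proof.
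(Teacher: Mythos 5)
Your proposal takes essentially the same route as the paper: the paper's entire proof is to note that all four families satisfy the Distinct Spheres Condition (citing Theorems 3 and 4, Lemma 8 and Corollary 10 of \cite{smtuwa-xx}) and then apply Theorem~\ref{thm:dsc}, which is exactly your reduction. Your direct DSC verifications for leafless trees and for primitive graphs are correct and self-contained (a nice bonus the paper does not include), and your fallback to the literature for the remaining two cases is what the paper does anyway --- just note that the relevant verifications are in \cite{smtuwa-xx}, not \cite{imsmtuwa-14}.
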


\begin{proof}
By \cite[Theorem 3, Theorem 4, Lemma 8 \& Corollary 10]{smtuwa-xx}, the graphs in Corollary~\ref{main:cor} all satisfy the Distinct Spheres Condition.
\end{proof}

We conclude this section with a probabilistic proof of Theorem~\ref{thm:dsc} in the case where $G$ is locally finite. The random zero-density coloring described in the proof is interesting because it is in some sense canonical, and can be applied to any graph. While this will not necessarily yield a distinguishing coloring, results from \cite{lehner-13} suggest that choosing colors at random is often a good way to obtain a distinguishing coloring.

Define a random coloring $l$ as follows. Fix a root $v_0$ and color every vertex at distance $n$ from the root independently from all other vertices blue with probability $p_n$ and red with probability $1-p_n$. Assume that $\lim_{n \to \infty} p_n =0$ and that $p_n$ is monotonically decreasing. Furthermore assume that $\sum _{n \in \mathbb N} p_n = \infty$. The first assumption makes sure that the colouring has density $0$, whereas the latter assumption will be used to show that the colouring is distinguishing with positive probability. Indeed, it follows from the law of large numbers that the above coloring almost surely has density $0$.

To show that the probability of obtaining a distinguishing coloring is positive, we first introduce an equivalence relation that is almost surely preserved by every color preserving automorphism. Let the relation $\sim$ on $V$ be defined by $u \sim w$ if there is some $n \in \mathbb N$ such that $B_u(n) = B_w(n)$. It is not hard to see that this is an equivalence relation. Note that if $u \nsim w$ then $S_u(n) \neq S_w(n)$ for every $n \in \mathbb N$.

\begin{lemma}
\label{lem:random_eqreln}
If $G$ is a connected, locally finite graph then, almost surely, every automorphism which preserves the random coloring $l$ described above, setwise fixes all equivalence classes w.r.t.\ the relation $\sim$.
\end{lemma}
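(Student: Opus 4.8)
The plan is to reduce the statement to a single pair of $\sim$-inequivalent vertices and then break that pair using independent randomness coming from spheres. Since $V$ is countable there are only countably many pairs of vertices, so it suffices to prove that for each fixed pair with $u\nsim w$ we have, almost surely, no colour-preserving automorphism $g$ with $g(u)=w$; a union over all such pairs then shows that almost surely every colour-preserving $g$ satisfies $g(u)\sim u$ for all $u$, and since $g$ and $g^{-1}$ then map each class into itself this is exactly the assertion that $g$ fixes every $\sim$-class setwise. So fix $u\nsim w$ and write $\ell:=d(u,w)$.

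The key reduction is a necessary condition. If $g$ is colour-preserving with $g(u)=w$, then $g(S_u(n))=S_{g(u)}(n)=S_w(n)$ and $g$ preserves colours, so for every $n$ the sphere-wise blue counts agree:
\[ |S_u(n)\cap\Vblue| = |S_w(n)\cap\Vblue|. \]
Hence it is enough to prove that, almost surely, $|S_u(n)\cap\Vblue|\neq|S_w(n)\cap\Vblue|$ for at least one $n$. Writing $T_n:=S_u(n)\Delta S_w(n)$, the difference of the two counts depends only on the colours of the vertices of $T_n$, each weighted by $\pm 1$ according to whether it lies in $S_u(n)$ or $S_w(n)$; moreover $T_n\subseteq\dsc(u,w)=\{x:d(u,x)\neq d(w,x)\}$, and every vertex of $T_n$ has both its $u$- and $w$-distance in $[n-\ell,n+\ell]$.

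To produce independent chances to violate these equalities I would select witnesses. Because $u\nsim w$ we have $T_n\neq\emptyset$ for every $n$, so $\dsc(u,w)$ contains, for each $n$, a vertex whose distance from $u$ lies in $[n-\ell,n+\ell]$; thus the set of $u$-distances realised in $\dsc(u,w)$ has gaps at most $2\ell+1$. I would therefore choose $x_1,x_2,\dots\in\dsc(u,w)$ with $a_k:=d(u,x_k)$ strictly increasing and $a_{k+1}>a_k+2\ell$; the gap bound guarantees such a choice with $a_k$ growing linearly in $k$. The spacing forces $T_{a_1},T_{a_2},\dots$ to be pairwise disjoint (a vertex of $T_{a_k}$ has both its distances in $[a_k-\ell,a_k+\ell]$), so the blue-count differences at the radii $a_k$ are functions of disjoint, hence independent, collections of vertex colours. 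Conditioning at radius $a_k$ on the colours of $T_{a_k}\setminus\{x_k\}$ leaves the difference equal to a constant plus the indicator that $x_k$ is blue, so it is nonzero for at least one colour of $x_k$ and
\[ \Prob\big[\,|S_u(a_k)\cap\Vblue|\neq|S_w(a_k)\cap\Vblue|\,\big] \;\geq\; \min\{p_{d(v_0,x_k)},\,1-p_{d(v_0,x_k)}\} \;=\; p_{d(v_0,x_k)} \]
for all large $k$, since $p_n\to 0$.

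Finally I would invoke the second Borel--Cantelli lemma: the events above are independent, so it remains to check that $\sum_k p_{d(v_0,x_k)}=\infty$. Here $d(v_0,x_k)\leq a_k+d(v_0,u)$ grows linearly in $k$ and $(p_n)$ is monotonically decreasing, so a block comparison bounds $\sum_k p_{d(v_0,x_k)}$ below by a positive multiple of $\sum_n p_n=\infty$. Thus almost surely infinitely many sphere-wise blue counts differ, which rules out any colour-preserving $g$ mapping $u$ to $w$, as required. The main obstacle is precisely this last balancing act: the witnesses must be spaced far enough apart to make the controlling vertex sets $T_{a_k}$ disjoint, and hence the trials independent, yet densely enough that the associated probabilities $p_{d(v_0,x_k)}$ still sum to infinity. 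That both can be arranged at once is where the geometric consequence of $u\nsim w$ (the syndeticity of the $u$-distances realised in $\dsc(u,w)$), the monotonicity of $(p_n)$, and the hypothesis $\sum_n p_n=\infty$ are used together.
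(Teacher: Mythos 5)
Your proof is correct and follows essentially the same route as the paper's: a union bound over the countably many inequivalent configurations, a sphere-wise blue-count identity as the necessary condition for a colour-preserving automorphism, a single-witness conditioning argument bounding the failure probability below by $p_{d(v_0,x_k)}$, and the second Borel--Cantelli lemma using disjointness of the witness sets together with $\sum_n p_n=\infty$. The only (cosmetic) difference is that you compare $|S_u(n)\cap\Vblue|$ with $|S_w(n)\cap\Vblue|$ directly for a pair $u\nsim w$ and let the common part of the two spheres cancel, whereas the paper records the triple $(u,gu,g^2u)$ and compares the disjoint sets $S_u(n)\setminus S_v(n)$ and $S_v(n)\setminus S_w(n)$; both devices yield the same estimate.
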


\begin{proof}
Take an automorphism $g \in \A(G)$ which does not setwise fix all equivalence classes. Then there must be vertices $u,v$ such that $u \not \sim v$ and $g u =v$. Hence every such automorphism is contained in a set of the form
\[
\Lambda_{uvw} := \{g \in \A G \mid g u =v, w = g v\}
\]
for some triple $u,v,w$ such that $u \nsim v$.

We claim that we need only show that the following holds for all triples $u,v,w$ with $u \not \sim v$:
\[
\Prob[\exists g \in \Lambda_{uvw}: l \circ g = l] = 0.
\]
Indeed, in this case by $\sigma$-subadditivity of the probability measure we have that
the probability that there exists a color preserving $g \in \A(G)$ that does not setwise fix the equivalence classes is bounded above by
\[\sum_{\substack{u,v,w \\ u\not \sim v}} \Prob[\exists g \in \Lambda_{uvw}: l \circ g = l] = 0.\]

Fix vertices $u, v, w$ such that $u \not \sim v$. It remains for us to show that the probability of finding a color preserving automorphism in $\Lambda_{uvw}$ is $0$.
If $\Lambda_{uvw}$ is empty, then this holds vacuously, so we may assume that $\Lambda_{uvw}$ is nonempty.
 Observe that every automorphism in $\Lambda_{uvw}$ must move $X_n := S_u(n) \setminus S_v(n)$ to the set $Y_n := S_v(n) \setminus S_w(n)$. Clearly $X_n$ and $Y_n$ are disjoint and have the same cardinality. The sets are nonempty because $S_u(n)$ and $S_v(n)$ are distinct and have the same finite cardinality.

In order to have a color preserving automorphism in $\Lambda_{uvw}$ it is necessary that $X_n$ and $Y_n$ receive the same number of blue vertices for every $n$. For each $n \in \N$ choose $x_n \in X_n$ and let $k(n)$ be the distance from $x_n$ to the root vertex $v_0$. Then $n - d(u,v_0) \leq  k(n) \leq n + d(u,v_0)$ and hence $\lim_{n \to \infty} p_{k(n)} = 0$.

Let $n_0$ be such that $p_{k(n)} \leq \frac 12$ for every $n \geq n_0$. To simplify our notation, let $\Vred$ and $\Vblue$ be respectively the set of red vertices, and the set of blue vertices, and denote by $b(S, T)$ the event that $|S \cap \Vblue| = |T \cap \Vblue|$. For $n \geq n_0$ write $X_n^* := X_n \setminus \{x_n\}$. Then we have,
\begin{align*}
\Prob[b(X_n,Y_n)]
	&\leq \Prob[(x_n \in \Vred) \land b(X_n^*,Y_n)] +
\Prob[(x_n \in \Vblue) \land \lnot b(X_n^*, Y_n)] \\
	&= (1-p_{k(n)}) \Prob[b(X_n^*,Y_n)] + (p_{k(n)}) \Prob[ \lnot b(X_n^*,Y_n)]\\
	&\leq (1-p_{k(n)}) \Big( \Prob[b(X_n^*, Y_n)] + \Prob[\lnot b(X_n^*, Y_n)] \Big)\\
	&\leq (1-p_{n + d(u,v_0)}).
\end{align*}

Hence, $\Prob[\lnot b(X_n, Y_n)] \geq p_{n + d(u,v_0)}$. Since $p_n$ is decreasing and $\sum _{n \in \mathbb N} p_n$ diverges we have that for every $r \in \mathbb N$,
\[
\sum_{n \geq \frac{n_0}{r}} \Prob[\lnot b(X_{rn}, Y_{rn})]
\geq \sum_{n \geq \frac{n_0}{r}} p_{rn + d(u,v_0)}
\geq \frac{1}{r} \sum_{n \geq n_0} p_{n + d(u,v_0)}
=\infty.
\]

If $r>d(u,v)$, then the sets $X_{rn} \cup Y_{rn}$ and $X_{rm} \cup Y_{rm}$ are disjoint whenever $n \not = m$. Since disjoint sets are colored independently, we can invoke the Borel-Cantelli Lemma to conclude that $\lnot b(X_{rn}, Y_{rn})$ almost surely happens for infinitely many $n$. Hence there almost surely is no color preserving automorphism in $\Lambda_{uvw}$.
\end{proof}

\begin{theorem}
If a connected, locally finite graph satisfies the DSC, then the random coloring described above
has almost surely zero density and a
nonzero probability of being distinguishing.
\end{theorem}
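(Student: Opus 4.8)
The plan is to treat the two conclusions separately: almost-sure density zero, which is routine, and positive probability of distinguishing, which is the heart of the matter.

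For the density, I would work directly from the hypothesis $\lim_n p_n = 0$. Fix any vertex $w$ as a root. Since $G$ is locally finite and, by the DSC, has infinite diameter, every ball $B_w(N)$ is finite and $|S_w(n)| \ge 1$ for all $n$. The expected number of blue vertices in $B_w(N)$ equals $\sum_{n \le N} p_n |S_w(n)|$, which is $o(|B_w(N)|)$ because $p_n \to 0$; as the colours are chosen independently, a law of large numbers (e.g.\ via Chebyshev and Borel--Cantelli) shows that the actual count concentrates around this mean, so $\delta_w(\Vblue) = 0$ almost surely. Intersecting over the countably many roots $w$ gives $\delta(\Vblue) = 0$ and hence $\delta(l) = 0$ almost surely.

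The distinguishing statement I would reduce, via Lemma~\ref{lem:random_eqreln}, to the group $H := \{\, g \in \A(G) : g \text{ fixes every equivalence class of } \sim \text{ setwise} \,\}$: almost surely every colour-preserving automorphism lies in $H$, so it suffices to show that with positive probability no nontrivial element of $H$ preserves the colouring. Here the DSC does the structural work. Writing $[u]$ for the $\sim$-class of $u$, the DSC together with the definition of $\sim$ forces any two distinct vertices of a common class to lie at different distances from $v$ (two equidistant vertices would have eventually-equal spheres, contradicting the DSC); equivalently, each class meets each sphere $S_v(n)$ in at most one vertex. Consequently, if $g \in H$ fixes $v$ then $g$ preserves every sphere $S_v(n)$ setwise and, since each class meets each sphere at most once, fixes every vertex; thus the stabiliser of $v$ in $H$ is trivial, and every nontrivial $g \in H$ sends $v$ to a distinct vertex of $[v]$. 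In particular $H$ acts freely on the orbit $Hv \subseteq [v]$.

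It remains to break every nontrivial $g \in H$ with positive probability. The clean mechanism is to colour $[v]$ asymmetrically: on the event that $v$ is the \emph{unique} blue vertex of $[v]$, any colour-preserving $g \in H$ must carry the unique blue vertex of $[v]$ to itself, so $gv = v$ and hence $g = \mathrm{id}$ by the previous paragraph. This event has probability $p_0 \prod_{c \in [v] \setminus \{v\}} (1 - p_{d(v,c)})$, which is positive exactly when $\sum_{c \in [v]} p_{d(v,c)} < \infty$, and this already settles the theorem whenever the class $[v]$ is finite. The main obstacle is the complementary regime, where the blue-probabilities along $[v]$ are not summable and the ``unique blue vertex'' event has probability zero. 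There I would argue that with positive probability the random two-colouring of the free $H$-set $Hv$ is nonetheless aperiodic, i.e.\ has trivial stabiliser in $H$ --- the point being that a single well-placed colour difference already rules out most of $H$, whereas the per-automorphism preserving-probabilities are individually close to $1$, so a naive union bound fails and one must instead exploit the freeness of the action along the lines of the random-colouring results of \cite{lehner-13}. Making this last step uniform across all of $H$, including any torsion elements, is the delicate part of the argument.
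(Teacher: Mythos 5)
Your treatment of the density statement and of the ``trivial stabiliser'' step is sound and matches the paper: almost-sure density zero follows from $p_n\to 0$ and independence, and the DSC does force each $\sim$-class to meet each sphere about $v$ at most once, so that any class-preserving automorphism fixing $v$ is the identity. The gap is in the final step, which you explicitly leave open: you try to make $v$ the unique blue vertex of its entire class $[v]$, observe that this event can have probability zero when $[v]$ is infinite and the probabilities $p_{d(v,c)}$ are not summable, and then gesture at an aperiodicity argument in the spirit of \cite{lehner-13} without carrying it out. This is a genuine missing idea, not a technicality.

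The paper closes exactly this gap with one further structural observation: under the DSC and local finiteness, \emph{the orbit of $v$ under $\A(G)$ contains only finitely many vertices equivalent to $v$}. (If it contained infinitely many, local finiteness would yield an automorphism $g$ with $v\sim gv$ and $d(v,gv)$ exceeding the radius $n_0$ at which $B_v(n_0)=B_w(n_0)$ for some $w\sim v$ with $w\ne v$; then $gv$ and $gw$ are distinct, equidistant from $v$, and have equal $n$-spheres for all $n>n_0$, contradicting the DSC at $v$.) Since a colour-preserving automorphism can only send $v$ into its orbit, and by Lemma~\ref{lem:random_eqreln} almost surely only into $[v]$, the relevant target set is $[v]\cap\mathrm{orbit}(v)$, which is finite; requiring $v$ to be the unique blue vertex of this finite set has positive probability unconditionally, and conditioned on that event the coloring is almost surely distinguishing by your own trivial-stabiliser argument. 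So the troublesome ``non-summable'' regime never arises, and no appeal to aperiodic colourings of a free $H$-set is needed. As written, your proof establishes the theorem only when the product you exhibit happens to converge; the finiteness of $[v]\cap\mathrm{orbit}(v)$ is the one substantive ingredient you are missing.
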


\begin{proof}
Suppose that the graph $G$ satisfies the DSC with respect to some vertex $u$. We first show that the orbit of $u$ under the automorphism group contains only finitely many vertices which are equivalent with $u$. Assume that this is not the case. Let $w \sim u$ be an arbitrary vertex equivalent to $u$. Then there is $n_0 \in \mathbb N$ such that $B_{u}(n_0) = B_{w}(n_0)$. The orbit of $u$ contains infinitely many vertices equivalent to $u$ and $G$ is locally finite, hence we can find an automorphism $g$ such that $u \sim g u$ and $d(u, g u) > n_0$. Clearly $B_{g u}(n) = B_{g w}(n)$ for every $n > n_0$. This in particular implies that $d(u, g u) = d(u, g w)$ and thus contradicts the DSC with respect to $u$.

Now let $l$ be a random coloring as described above. Recall that by the law of large numbers, the coloring almost surely has density $0$.
Let $F_{u}$ be the event that every color preserving automorphism fixes $u$ and let $D$ be the event that the random coloring is distinguishing.

By Lemma \ref{lem:random_eqreln} every color preserving automorphism almost surely must map the vertex $u$ to one of the finitely many equivalent vertices in the orbit of $u$. Furthermore there is a nonzero probability that among these finitely many vertices $u$ is the only one colored blue. Hence $\Prob (F_{u}) > 0$.

Finally, if every color preserving automorphism fixes $u$, then each such automorphism also fixes distances from $u$. By the DSC any two vertices at the same distance from $u$ are inequivalent. Hence by Lemma~\ref{lem:random_eqreln} we have that $\Prob(D \mid F_{u}) = 1$ and thus $\Prob(D) = \Prob (F_{u}) \cdot \Prob(D \mid F_{u}) > 0$.
\end{proof}

\section{Zero density for graphs with infinite motion and subquadratic growth}
\label{sec:quadratic}

We say that an infinite, locally finite, connected graph $G$ has \emph{polynomial growth} if there is a vertex $v \in V$ and a constant $c \in \N$ such that $\left|B_{v}(n)\right| \sim \mathcal{O}(n^c)$.
It is easy to see that polynomial growth and the constant $c$ is independent of the choice of vertex $v$.
If $c=1$, then we say the growth is \emph{linear} and if $c=2$ we say it is \emph{quadratic}. Observe that the two-sided infinite path has linear growth, and that the growth of the grid of integers in the plane is quadratic.

For $g \in\A(G)$, let $\Fix(g)$ denote the set of vertices fixed by $g$. The {\em motion} of $g$ is $|V \setminus \Fix(g)|$.
If every nontrivial automorphism of a graph $G$ has infinite motion, we say that $G$ has \emph{infinite motion}. For such graphs the following result will be of importance.

\begin{lemma}[{\cite[Lemma 3.5]{lehner-16}}]
\label{lem:main}
Let $G$ be a graph with infinite motion. For every $g \in\A(G)$, the graph obtained by removing $\Fix(g)$ and all incident edges from $G$ has only infinite components.
\end{lemma}

\begin{theorem}
\label{thm:spheres}
Let $G$ be a connected, locally finite graph with infinite motion. If $\epsilon > 0$ and $G$ contains a vertex $v$ for which there exist infinitely many $n\in\mathbb{N}$ such that
\begin{equation}
\label{eq:smallspheres}
\left|S_{v}(n)\right| \leq \frac{2n(1-\epsilon)}{3}\,,
\end{equation}
then the distinguishing number $\D(G)$ of $G$ is  $2$ and the 2-distinguishing density  is zero.
\end{theorem}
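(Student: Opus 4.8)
The plan is to root $G$ at the vertex $v$ supplied by \eqref{eq:smallspheres} and to build a deterministic $2$-colouring whose blue class $\Vblue$ is supported on the \emph{thin} spheres, i.e.\ those radii $n$ for which $|S_v(n)| \le \tfrac{2n(1-\epsilon)}{3}$. As in the proof of Theorem~\ref{thm:dsc}, I would first attach a small asymmetric blue gadget near $v$ (colouring $v$ blue together with a uniquely placed blue edge) so that every colour-preserving $g \in \A(G)$ is forced to fix $v$; thereafter such a $g$ stabilises every sphere $S_v(n)$ setwise. The whole problem then reduces to colouring the spheres so that the induced action of the stabiliser of $v$ on $\bigcup_n S_v(n)$ admits no nontrivial colour-preserving element, while keeping $\Vblue$ so sparse that $|B_v(n)\cap\Vblue| = o(|B_v(n)|)$, which yields $\delta_v(\Vblue)=0$.

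The engine driving rigidity is infinite motion, and it is needed to overcome the following obstacle: a single sphere can never be made rigid, since the setwise stabiliser of any two-colouring of a finite set is nontrivial, so no automorphism can be broken by the colours on one sphere alone. Infinite motion resolves this. If $g \ne \mathrm{id}$ fixes $v$, then by Lemma~\ref{lem:main} the graph $G - \Fix(g)$ has only infinite components; each such component is connected, locally finite and unbounded, hence meets $S_v(n)$ for all sufficiently large $n$. Every vertex of such a component is moved by $g$, and since $g$ preserves spheres, a moved $x \in S_v(n)$ has $gx \ne x$ also in $S_v(n)$ and also moved. Thus $g$ acts nontrivially on \emph{every} sufficiently large sphere, in particular on infinitely many thin spheres. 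Each nontrivial $g$ therefore presents itself for breaking on infinitely many thin spheres rather than on just one, and it suffices to break it on a single one of them.

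With this in hand I would construct the colouring by an inductive sphere-by-sphere reduction. Writing $A_n$ for the finite group of permutations of $B_v(n)$ induced by colour-preserving stabilisers of $v$ that are compatible with the colours already chosen on $B_v(n)$, the aim is to choose the colours on each thin sphere $S_v(n)$ so that the chain $A_0 \ge A_1 \ge \cdots$ has trivial inverse limit. Because an as-yet-unbroken nontrivial residual symmetry moves at least two vertices of $S_v(n)$, a two-colouring of $S_v(n)$ that fails to be invariant under the residual action eliminates it, and one only ever needs to eliminate each $g$ on one of the infinitely many thin spheres through which it passes. This is precisely where \eqref{eq:smallspheres} enters: the bound $|S_v(n)| \le \tfrac{2n(1-\epsilon)}{3}$ makes the thin spheres both numerous and small, so that the per-sphere breaking budget, accumulated over the thin spheres, should exhaust the residual group while spending only few blue vertices per sphere, with the factor $\tfrac23$ furnishing the slack $\epsilon$ needed to make the relevant counting (or series) close up.

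Density zero then follows as in Theorem~\ref{thm:dsc}: the blue vertices lie only on thin spheres and are few on each, so $|B_v(n)\cap\Vblue|$ grows of strictly smaller order than $|B_v(n)| \ge n$; if the ratios $|B_v(n+1)|/|B_v(n)|$ are bounded one upgrades $\delta_v(\Vblue)=0$ to $\delta(\Vblue)=0$ via Lemma~\ref{lemma:at_v_vs_all_v}, and otherwise one argues density zero directly at each vertex. The main obstacle I expect is the quantitative heart of the third paragraph: controlling, uniformly over the uncountably many automorphisms, how many residual symmetries survive to radius $n$, and matching this against the breaking power of a sparse colouring of the thin spheres. The tempting shortcut of reducing to Theorem~\ref{thm:dsc} fails, since infinite motion does not prevent two equidistant vertices from having equal spheres at every radius; one genuinely needs Lemma~\ref{lem:main} together with the thinness bound \eqref{eq:smallspheres}, and calibrating the two against each other is the delicate step.
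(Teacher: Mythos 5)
Your skeleton matches the paper's: a blue gadget near $v$ forcing every colour-preserving automorphism to fix $v$, Lemma~\ref{lem:main} to propagate nontriviality of any such automorphism to all sufficiently large spheres, an inductive trivialisation of the groups induced on the balls $B_v(n)$, and a final count giving density zero. But the step you explicitly defer --- ``calibrating'' the thinness bound against the breaking power of the colouring --- is the actual content of the theorem, and the plan you sketch for it would not work. First, supporting $\Vblue$ on the thin spheres alone is not enough: a $2$-colouring of a single finite sphere is preserved by the setwise stabiliser of its blue subset, which is nontrivial whenever the residual group is at all large, and the hypothesis only guarantees infinitely many thin radii, so between consecutive thin radii there may be exactly one thin sphere available. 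Second, ``eliminating each $g$ on one thin sphere'' is not the right unit of work: to trivialise the inverse limit you must ensure that every nontrivial element of the finite group induced on $B_v(n)$ admits \emph{no} colour-compatible extension to larger balls, i.e.\ you must break entire cosets of kernels, not single automorphisms, and no enumeration argument is offered for that.

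The missing engine is group-theoretic. Let $K_n$ be the group induced by $\A(G)_v$ on $B_v(n)$. By Lemma~\ref{lem:main} the restriction map $K_n \to \Sym(S_v(n))$ is \emph{injective} (an element in its kernel would fix a sphere pointwise while moving something inside the ball, forcing a finite component of $G - \Fix(g)$). Hence the longest subgroup chain in $K_n$ has length at most $\ell(|S_v(n)|)$, and by Cameron--Solomon--Turull $\ell(m) = \lceil 3m/2\rceil - b(m) - 1$, so at a thin radius $n_i$ this length is $< n_i(1-\epsilon)$; this is exactly where the constant $\tfrac23$ in \eqref{eq:smallspheres} comes from. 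Choosing the thin radii so that $n_i\epsilon > n_{i-1}$, one places at most one blue vertex per sphere in the \emph{annulus} $B_v(n_i)\setminus B_v(n_{i-1})$ (not on thin spheres), each chosen to strictly shrink the pointwise stabiliser in $K_{n_i}$; the chain bound guarantees the descent terminates, with the stabiliser of the chosen set fixing $B_v(n_{i-1})$ pointwise, before the $n_i - n_{i-1} > n_i(1-\epsilon)$ available spheres run out. One blue vertex per sphere then gives $|B_v(n)\cap\Vblue| \sim \mathcal{O}(n)$, which is $o(|B_v(n)|)$ only for super-linear growth --- so the linear-growth case must be split off and handled separately (the paper does this via countability of $\A(G)$ and finite $2$-distinguishing cost), another point your density paragraph glosses over.
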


\begin{proof}
If $G$ has linear growth, then $\A(G)$ is countable (this was first observed by T.~Tucker, see \cite[Lemma 4.2]{boutin-imrich-14}). Since $G$ has infinite motion, it is known (see \cite[Theorem 3.2]{boutin-imrich-14}) that the distinguishing number $\D(G)$ of $G$ is $2$ and moreover there is a distinguishing coloring of $G$ involving only two colors in which one of the color classes is finite. Since $G$ is infinite, this implies that the 2-distinguishing density of $G$ is zero. Hence we can assume that $G$ has super-linear growth.
Since $G$ has super-linear growth, there are infinitely many vertices in $G$ of degree at least three. Let $w \not \in B_{v}(2)$ be such a vertex.

We first color all vertices red then we color selected vertices blue. We begin by coloring $v$ and all vertices in $B_{w}(1)$ blue. We will not color any other vertex in $B_{v}(2d(v,w)+1)$ blue, nor will there be another blue vertex with three or more blue neighbors. Note then that any color-preserving automorphism of $G$ must fix $B_w(1)$ setwise and fix $v$ pointwise.

Next, we construct a sequence $\{n_i\}_{i \in \N}$ inductively, with $n_1 \epsilon > 2d(v,w)+1$ and $\left | S_{v}(n_1) \right| \leq \frac{2n_1(1-\epsilon)}{3}$, and for all $i > 1$,
\[n_i \epsilon > n_{i-1} \quad \text{ and } \quad \left | S_{v}(n_i) \right| \leq \frac{2n_i(1-\epsilon)}{3}.\]

Let $\ell(n)$ denote the length of the longest chain of subgroups in the symmetric group $\Sym(n)$. It was shown by Cameron, Solomon and Turull (\cite{casot-89}) that $\ell(n) = \left \lceil \frac{3n}{2} \right \rceil - b(n) -1$, where $b(n)$ is the number of ones in the base $2$ expansion of $n$. In particular, this, together with the bound $\left | S_{v}(n_i) \right| \leq \frac{2n_i(1-\epsilon)}{3}$, implies that $\ell(|S_v(n_i)|) < n_i(1-\epsilon) < n_i - n_{i-1}$.

For $n \in \N$, let $K_n$ be the automorphism group induced by $\A(G)_v$ acting on $B_v(n)$. For $H \leq K_n$, let $\pi_n(H)$ be the subgroup of $\Sym(S_v(n))$ induced by $H$.

Notice that if distinct elements $g_1, g_2 \in K_n$ have the same image under $\pi_n$, then there is an automorphism of $G$ that fixes $S_{v}(n)$ pointwise and acts nontrivially on $B_v(n)$. This contradicts Lemma~\ref{lem:main}. Hence $\pi_n$ is injective. Therefore, for all $i \in \N$, the length of the longest chain of subgroups in $K_{n_i}$ is strictly less than $n_i - n_{i-1}$.

We will now define a set of vertices $C_i \subseteq B_v(n_i) \setminus B_v(n_{i-1})$ that will be colored blue. We will do this in such a way that no two vertices in $C_i$ will have the same distance from $v$. If $B_v(n_i)$ is fixed pointwise by $K_{n_i}$, then choose any $x_0 \in S_v(n_i)$. Otherwise, by Lemma~\ref{lem:main} we may choose $x_0 \in S_v(n_i)$ such that $x_0$ is not fixed by $K_{n_i}$. Assume we have chosen $x_0,
\ldots, x_{j-1}$ for some $j \geq 1$. If there exists $x \in B_{v}(n_i - j)$ such that $x$ is not fixed by the pointwise stabilizer $(K_{n_i})_{(x_0, \ldots, x_{j-1})}$ then let $x_j := x$ and note that $(K_{n_i})_{(x_0, \ldots, x_{j})}$ is a proper subgroup of $(K_{n_i})_{(x_0, \ldots, x_{j-1})}$. Otherwise let $C_i := \{x_0, \ldots, x_{j-1}\}$, and note that in this case the pointwise stabilizer $(K_{n_i})_{(C_i)}$ fixes $S_v(n_i - j)$ pointwise, and hence fixes $B_v(n_i - j)$ pointwise. Since the length of the longest chain of subgroups in $K_{n_i}$ is strictly less than $n_i - n_{i-1}$ this process must terminate with $C_i \subseteq B_v(n_i) \setminus B_v(n_{i-1})$. Moreover,
the setwise stabilizer $(K_{n_i})_{\{C_i\}}$ is equal to the pointwise stabilizer $(K_{n_i})_{(C_i)}$ (because elements in $C_i$ lie in different spheres) and hence
$(K_{n_i})_{\{C_i\}}$ fixes $B_v(n_{i-1})$ pointwise.

We now color all vertices in $\bigcup_{i \in \N} C_i$ blue, and claim that this coloring is distinguishing with density zero. Let $\Vblue$ be the set of blue vertices. If $g \in \A(G)$ is a color-preserving automorphism then, as we argued previously, it must fix $v$ and $\bigcup_{i \in \N} C_i$ pointwise. Hence $g$ fixes $B_v(n_i)$ for all $i \in \N$, and is therefore trivial. Our coloring is thus $2$-distinguishing.

Since the growth of $G$ is super-linear,
\[\lim_{n \rightarrow \infty} \frac{n}{|B_v(n)|} = 0.\]
However, for all large enough $n$ there is at most one blue vertex in $S_v(n)$, so $|B_v(n) \cap \Vblue| \sim \mathcal{O}(n)$. Hence the 2-distinguishing density is zero.
\end{proof}

\end{document}